\def\co{\colon\thinspace}
\DeclareMathAlphabet{\mathsfsl}{OT1}{cmss}{m}{sl}
\newtheorem{thm}{Theorem}[section]
\newtheorem{lem}[thm]{Lemma}
\newtheorem{prop}[thm]{Proposition}
\theoremstyle{definition}
\newtheorem{defn}[thm]{Definition}
\newtheorem{rem}[thm]{Remark}
\begin{document}

\title{A note on knot Floer homology and fixed points of monodromy}

\author{{\Large Yi NI}\\{\normalsize Department of Mathematics, Caltech, MC 253-37}\\
{\normalsize 1200 E California Blvd, Pasadena, CA
91125}\\{\small\it Emai\/l\/:\quad\rm yini@caltech.edu}}

\date{}
\maketitle

\begin{abstract}
Using an argument of Baldwin--Hu--Sivek, we prove that if $K$ is a hyperbolic fibered knot with fiber $F$ in a closed, oriented $3$--manifold $Y$, and $\widehat{HFK}(Y,K,[F], g(F)-1)$ has rank $1$, then the monodromy of $K$ is freely isotopic to a pseudo-Anosov map with no fixed points. In particular, this shows that the monodromy of a hyperbolic L-space knot is freely isotopic to a map with no fixed points.
\end{abstract}

\section{Introduction}

Knot Floer homology, defined by Ozsv\'ath--Szab\'o \cite{OSzKnot} and Rasmussen \cite{RasThesis}, is a powerful knot invariant. It contains a lot of information about the topology of the knot. For example, it detects the Seifert genus of a knot $K$ \cite{OSzGenus}, and it determines whether $K$ is fibered \cite{Gh,NiFibred}. In fact, such information is contained in $\widehat{HFK}(K,g(K))$, the first nontrivial term of the knot Floer homology with respect to the Alexander grading, which is often referred as the ``topmost term'' in knot Floer homology.

In recent years, it became clear that $\widehat{HFK}(K,g(K)-1)$, which is often called ``the second term'' or ``the next-to-top term'' in knot Floer homology, also contains interesting information about the knot. For example, Lipshitz--Ozsv\'ath--Thurston \cite{LOTFaithful} showed that the ``second term'' of the bordered Floer homology of a surface mapping class completely determines this mapping class. Baldwin and Vela-Vick \cite{BVV} proved that the second term of the knot Floer homology of a fibered knot is always nontrivial, and Ni \cite{NiSecondTerm} proved the same result for knots whose topmost term is supported in a single $\mathbb Z/2\mathbb Z$--grading.

The main theorem in this paper is also of this type.

\begin{thm}\label{thm:FPfree}
Let $Y$ be a closed, oriented $3$--manifold, and $K\subset Y$ be a hyperbolic fibered knot with
fiber $F$ and monodromy $\varphi$. If \[\mathrm{rank}\widehat{HFK}(Y,K,[F],g(F)-1)=1,\] then $\varphi$ is freely isotopic to a pseudo-Anosov map with no fixed points. 
\end{thm}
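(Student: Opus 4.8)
The plan is to convert the rank hypothesis into a count of fixed points of a canonical representative of $\varphi$. Since $K$ is hyperbolic and fibered, Thurston's classification shows that $\varphi$ is freely isotopic to a pseudo-Anosov map $\psi$, so it suffices to produce such a $\psi$ with no fixed points. Here I would invoke Nielsen--Thurston theory: the fixed point classes of a pseudo-Anosov map are all essential, so $\psi$ realizes the minimal number of fixed points in its free isotopy class, namely the Nielsen number $N(\psi)$. Consequently $\varphi$ is freely isotopic to a fixed-point-free pseudo-Anosov map if and only if $N(\psi)=0$, and the goal becomes: \emph{rank one forces $N(\psi)=0$}.

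The second step is to build a bridge between $\widehat{HFK}(Y,K,[F],g(F)-1)$ and the fixed-point data of $\psi$. The fibered structure exhibits the complement $Y\setminus N(K)$ as the mapping torus of $\psi$ and presents the next-to-top group as the sutured Floer homology of the associated balanced sutured manifold in the next-to-extremal $\mathrm{Spin}^c$ structures. As a conceptual model on the symplectic side, the fixed-point (Seidel) Floer homology of $\psi$ is generated by $\mathrm{Fix}(\psi)$, and for a pseudo-Anosov map this complex is rigid (in the spirit of Cotton-Clay's computation), so after accounting for the differential its rank is controlled by $N(\psi)$. The Baldwin--Vela-Vick theorem supplies the baseline: one distinguished ``contact'' class always survives, giving the rank-one summand present even when $N(\psi)=0$.

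The heart of the argument, and where I would import the method of Baldwin--Hu--Sivek, is to show that each essential fixed point of $\psi$ contributes a homology class in $\widehat{HFK}(Y,K,[F],g(F)-1)$ that is linearly independent from the contact class. Geometrically, a fixed point $p$ yields a closed orbit $\{p\}\times S^1$ in the mapping torus, and this orbit should be promoted to a cycle whose class is detected by a grading refinement separating it from the contact generator; free isotopy is used to normalize the boundary behavior (the fractional Dehn twist coefficient) so that only genuinely interior, essential fixed points remain to be counted. Granting this, $N(\psi)>0$ would force the rank to be at least $2$, so rank one yields $N(\psi)=0$ and hence a fixed-point-free pseudo-Anosov representative. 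I emphasize that the signed count is not enough here: the Euler characteristic of the next-to-top group only records the Lefschetz number $L(\psi)$, which can vanish in the presence of fixed points of opposite index, so it is essential to work with the rank and with an honest Floer-theoretic lower bound.

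The main obstacle is precisely this third step: controlling the differential so that the fixed-point classes are neither cancelled nor collapsed onto the contact class in homology. The risk is that an orbit generator is killed by the boundary map or becomes homologous to the contact generator. Resolving this requires matching the symplectic fixed-point grading with the Alexander and Maslov refinement on the Heegaard side and invoking the pseudo-Anosov rigidity that forces the relevant part of the differential to vanish; this grading bookkeeping is the technical core of the Baldwin--Hu--Sivek-style argument.
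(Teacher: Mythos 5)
Your outline hinges on one central claim: that every essential fixed point of the pseudo-Anosov representative $\psi$ contributes a homology class in $\widehat{HFK}(Y,K,[F],g(F)-1)$ linearly independent from the Baldwin--Vela-Vick contact class, so that $N(\psi)>0$ would force rank at least $2$. This is precisely the step you defer (``the heart of the argument,'' ``the main obstacle''), and it is not a technical bookkeeping issue that can be waved through: it is essentially the theorem announced by Ghiggini--Spano, that $\widehat{HFK}(Y,K,g(K)-1)$ is isomorphic to a fixed-point Floer homology $HF^{\#}(\varphi)$ of the monodromy. The paper explicitly notes this result is only announced and unavailable, and the entire point of its argument is to \emph{avoid} building any direct map from fixed points (or their closed orbits in the mapping torus) to knot Floer generators. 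No mechanism is known at the level of this paper for ``promoting'' a fixed orbit to a cycle in $\widehat{HFK}$ with controlled gradings, so as written your proof has a gap exactly where the difficulty lies.

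The paper's actual route is quite different and circumvents this. First, rank one forces $\varphi$ to be right- or left-veering (Theorem~A.1 of \cite{NiCharSlope}); this, not Nielsen theory, is how the hypothesis first enters. Then one connect-sums $K$ with the mirror of an auxiliary cable knot $L'\subset S^1\times S^2$ (Lemmas~\ref{lem:HypS1S2} and~\ref{lem:4n+2}) so that the new monodromy is neither left- nor right-veering and, by K\"unneth and Hedden's cabling formula, the next-to-top rank becomes exactly $2$. Proposition~\ref{prop:ZeroSurg} (the zero surgery formula, which requires both the rank-$2$ and the non-veering hypotheses) then shows $HF^+$ of the zero surgery vanishes in the relevant $\mathrm{Spin}^c$ structure. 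Only at this point does fixed-point theory appear, and for the \emph{closed} surface bundle: via $HF=HM$ (Lee--Taubes, Kutluhan--Lee--Taubes, Kronheimer--Mrowka) this $HF^+$ is the symplectic Floer homology of the closed monodromy $\widehat\sigma$, and Cotton-Clay's Theorem~4.16 --- applicable because right-veering rules out Type~IIId fixed points --- gives a free summand generated by a superset of $\mathrm{Fix}(\widetilde\varphi)$, whose vanishing kills all fixed points. In short: the paper reduces to a closed-manifold statement where the identification of Floer homology with fixed-point data is already a theorem, whereas your plan requires that identification for the knot complement itself, which is the open ingredient. To repair your argument you would either have to prove the Ghiggini--Spano isomorphism (or at least the one-sided rank inequality you need), or restructure along the lines above.
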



Recall that a knot $K\subset S^3$ is an {\it L-space knot} if a positive surgery on $K$ is an L-space.
The most interesting scenario to apply Theorem~\ref{thm:FPfree} is when $K\subset S^3$ is a hyperbolic L-space knot. In this case it is well-known that $\mathrm{rank}\widehat{HFK}(S^3,K,g(K)-1)=1$ \cite{OSzLens}. We hope Theorem~\ref{thm:FPfree} will shed more light on the understanding of L-space knots.

The proof of Theorem~\ref{thm:FPfree} uses a strategy due to Baldwin--Hu--Sivek \cite{BHS}, who proved Theorem~\ref{thm:FPfree} when $K$ has the same $\widehat{HFK}$ as the cinquefoil $T_{5,2}$.
In \cite{BHS}, the authors made use of the zero surgery formula on alternating knots in Heegaard Floer homology due to Ozsv\'ath--Szab\'o \cite{OSzAlternating}. We basically replace this result with a more general zero surgery formula. For simplicity, we only state here a formula for knots in $S^3$. If we are just interested in proving 
Theorem~\ref{thm:FPfree} for knots in $S^3$, this case will suffice. The zero surgery formula used in the proof of Theorem~\ref{thm:FPfree} is Proposition~\ref{prop:ZeroSurg}.

Given a null-homologous knot $K\subset Y$, let $Y_{p/q}(K)$ be the manifold obtained by $\frac pq$--surgery on $K$.

\begin{prop}\label{prop:ZeroSurgS3}
Let $K\subset S^3$ be a hyperbolic fibered knot with
genus $g\ge3$. Suppose that the monodromy of $K$ is neither left-veering nor right-veering, then
\[
\mathrm{rank}\:HF^+(S^3_0(K),g-2)=\mathrm{rank}\widehat{HFK}(S^3,K,g-1)-2.
\]
\end{prop}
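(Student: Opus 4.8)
The plan is to reduce the left-hand side to a computation inside the knot Floer complex $CFK^\infty(K)$ and then to read off the rank from the next-to-top group $\widehat{HFK}(S^3,K,g-1)$. Since $g\ge 3$, the relevant $\mathrm{Spin}^c$ structure $s=g-2$ satisfies $1\le s<g$; in particular $s\ne 0$, so $HF^\infty$ vanishes in this $\mathrm{Spin}^c$ structure and $HF^+(S^3_0(K),s)$ is a finite-dimensional $\mathbb F$--vector space. First I would invoke the Ozsv\'ath--Szab\'o surgery formula \cite{OSzKnot}: for $0<s<g$ the group $HF^+(S^3_0(K),s)$ is the homology of the subquotient complex $C\{\min(i,j-s)=0\}$ of $CFK^\infty(K)$, equivalently of the mapping cone of $v_s^++h_s^+\colon A_s^+\to B^+$, where $A_s^+=C\{\max(i,j-s)\ge 0\}$, $B^+=C\{i\ge 0\}\simeq CF^+(S^3)$, and $v_s^+$, $h_s^+$ are the vertical and horizontal projections. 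The finiteness just noted forces the two $U$--towers, one in $H_*(A_s^+)$ and one in $H_*(B^+)=\mathcal T^+$, to cancel under $v_s^++h_s^+$, so that the rank of $HF^+(S^3_0(K),g-2)$ is carried entirely by the reduced part of the complex.

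Next I would identify this reduced part with $\widehat{HFK}(S^3,K,g-1)$ up to two correction terms. Filtering $C\{\min(i,j-s)=0\}$ by the Alexander grading $j$ and using the genus bound $\widehat{HFK}(S^3,K,t)=0$ for $t>g$, only the gradings $t\in\{g-2,g-1,g\}$ enter the vertical leg, while the horizontal leg contributes the tower that has already cancelled. The fibered hypothesis pins the top grading: $\widehat{HFK}(S^3,K,g)\cong\mathbb F$, generated by the distinguished open-book class $c$ \cite{OSzGenus,Gh,NiFibred}. Because $g\ge 3$ the three gradings $g-2,g-1,g$ are distinct and positive, and their conjugates $-(g-2),-(g-1),-g$ lie strictly below $g-2$, so no further gradings interfere. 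Thus, up to the two maps relating grading $g-1$ to the top class $c$ that I analyze next, the homology in question is $\widehat{HFK}(S^3,K,g-1)$; this reproduces the ``off by one'' shift between the $\mathrm{Spin}^c$ label $g-2$ and the Alexander grading $g-1$ already visible in the top case $s=g-1$, where the fibered condition gives rank one.

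The heart of the argument is to show that the two corrections coming from $c$ each remove exactly one dimension, producing the $-2$. There are precisely two maps between the top grading and grading $g-1$ that can act: a \emph{vertical} differential $d_+\colon\widehat{HFK}(S^3,K,g-1)\to\widehat{HFK}(S^3,K,g)$, arising from the $i=0$ complex $\widehat{CF}(S^3)$, and, via the conjugation symmetry, a \emph{horizontal} differential $d_-\colon\widehat{HFK}(S^3,K,g)\to\widehat{HFK}(S^3,K,g-1)$, the $U$--equivariant mirror coming from $h_s^+$. When $d_+$ is nonzero it cuts the rank by one through its kernel; when $d_-$ is nonzero it cuts the rank by one through its image. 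The content of the hypothesis is that these two maps are governed by the veering of the monodromy, in the spirit of Honda--Kazez--Mati\'c and of the next-to-top analyses of Baldwin--Vela-Vick and the author \cite{BVV,NiSecondTerm}: failure to veer in one direction makes $d_+\ne 0$, and failure to veer in the other makes $d_-\ne 0$. Since $\varphi$ is \emph{neither} left- nor right-veering, both $d_+$ and $d_-$ are nonzero and each removes one dimension, giving $\mathrm{rank}\,HF^+(S^3_0(K),g-2)=\mathrm{rank}\,\widehat{HFK}(S^3,K,g-1)-2$.

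The main obstacle is precisely this last step: making rigorous the dictionary between the two veering conditions and the non-vanishing of $d_+$ and $d_-$, and verifying that the two cancellations are independent, i.e.\ that $\mathrm{im}\,d_-\subseteq\ker d_+$, so that the net effect is $-2$ rather than $-1$ or $0$. This is where I expect to need the precise characterization of left- and right-veering through the contact class, together with the fibered and hyperbolic structure of $K$; the hypothesis $g\ge 3$ is used throughout to keep the gradings $g-2,g-1,g$ and their conjugates disjoint so that no additional differentials contribute.
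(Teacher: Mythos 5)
Your overall skeleton---localize near the top of $CFK^\infty$, and extract the $-2$ from two edge maps whose non-vanishing is forced by the failure of veering in each direction---is the same as the paper's (Lemma~\ref{lem:ZeroMC}), but there are two genuine problems. The essential one is that you never prove the step you yourself call ``the main obstacle'': the dictionary between the two veering conditions and the non-vanishing of the two edge maps. This is not a deferrable technicality; it is the entire mathematical content of the proposition, and everything before it is formal bookkeeping. The paper supplies it by quoting the argument in the proof of \cite[Theorem~1.1]{BVV}: fiberedness gives $C(-1,g-1)\cong C(-2,g-2)\cong\mathbb Z$ \cite{OSzContact}; the diagonal component $\partial_{zw}$ vanishes because $\partial^2=0$ forces its square to vanish on the rank-one top diagonal; and then the Honda--Kazez--Mati\'c vanishing of the contact class of $(F,\varphi)$ (when $\varphi$ is not right-veering) and of $(F,\varphi^{-1})$ (when $\varphi$ is not left-veering) upgrades, via the Baldwin--Vela-Vick argument, to the two statements actually needed: $(\partial_z)_*\colon\mathbb Z\to H_*(C(-1,g-2))\cong\widehat{HFK}(S^3,K,g-1)$ is \emph{injective} and $(\partial_w)_*\colon H_*(C(-1,g-2))\to\mathbb Z$ is \emph{surjective}, as maps on homology (chain-level non-vanishing of $d_\pm$ would not suffice). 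Your worry about independence of the two cancellations is, by contrast, the easy part: $\partial_w\partial_z=0$ follows from $\partial^2=0$ once $\partial_{zw}=0$, and injectivity, surjectivity, and $\partial_w\partial_z=0$ together give the rank count at once. Without the contact-class input there is no proof.

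Second, your stated surgery formula is wrong. For $s\ne 0$, $HF^+(S^3_0(K),s)$ is \emph{not} $H_*(C\{\min(i,j-s)=0\})$, and that hook complex is not quasi-isomorphic to the mapping cone of $v^+_s+h^+_s$ (which is a correct model). A concrete check: for $K=T_{3,4}$ (staircase complex, $g=3$) and $s=1$, the hook complex has homology of rank $3$, whereas $HF^+(S^3_0(T_{3,4}),1)\cong\mathbb Z$. The correct small model, used in the paper via \cite[Corollary~4.5]{OSzKnot}, is the quadrant $C\{i<0,\ j\ge g-2\}$; by the bound $|j-i|\le g$ it consists of exactly the three groups $C(-1,g-1)$, $C(-1,g-2)$, $C(-2,g-2)$, so it is already finite and there are no $U$-towers to cancel, and the middle group is literally $\widehat{CFK}(S^3,K,g-1)$, making your step of ``identifying the reduced part'' automatic. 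The tower-cancellation picture you describe is the right idea for the general statement, Proposition~\ref{prop:ZeroSurg}, where the paper compares $v^+_{g-2}$ and $h^+_{g-2}$ using the absolute grading; for knots in $S^3$ it is unnecessary, and as organized around the hook complex it is incorrect as stated.
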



During the course of this work, the author learned from John Baldwin that Theorem~\ref{thm:FPfree} is a special case of a theorem announced by Ghiggini and Spano \cite{GS}, which states that $\widehat{HFK}(Y,K,g(K)-1)$ is isomorphic to $HF^{\#}(\varphi)$, a version of the symplectic Floer homology of the monodromy $\varphi$.

This paper is organized as follows. In Section~\ref{sect:Pre}, we recall some basic results about mapping classes. In Section~\ref{sect:ZeroSurg}, we use the zero surgery formula in Heegaard Floer homology to prove Proposition~\ref{prop:ZeroSurg} and Proposition~\ref{prop:ZeroSurgS3}. In Section~\ref{sect:Main}, we prove Theorem~\ref{thm:FPfree}.

\vspace{5pt}\noindent{\bf Acknowledgements.}\quad  The author was
partially supported by NSF grant number DMS-1811900. The author wishes to thank John Baldwin for many helpful discussions and comments on a draft of this paper.


\section{Preliminaries on mapping classes}\label{sect:Pre}

In this section, we recall Thurston's classification of mapping classes and the concept of right-veering diffeomorphisms.


Given a compact oriented surface $F$ with boundary, Thurston \cite{ThurstonSurface} classified automorphisms of $F$ as follows. Every automorphism $\varphi$ falls into exactly one of $3$ classes:
\begin{itemize}
\item Periodic: $\varphi$ is freely isotopic to a periodic map $\widetilde{\varphi}$ with $\widetilde{\varphi}^n=\mathrm{id}$ for some integer $n\ge1$.

\item Pseudo-Anosov: $\varphi$ is freely isotopic to a pseudo-Anosov map $\widetilde{\varphi}$. That is, there exist two singular measured foliations $(\mathcal F^u,\mu^u),(\mathcal F^s,\mu^s)$ of $F$, which are transverse everywhere except at the singular points, such that
\[
\widetilde{\varphi}(\mathcal F^u,\mu^u)=\lambda(\mathcal F^u,\mu^u),\quad \widetilde{\varphi}(\mathcal F^s,\mu^s)=\lambda^{-1}(\mathcal F^s,\mu^s)
\]
for a fixed real number $\lambda>1$.

\item Reducible: $\varphi$ is freely isotopic to a reducible map $\widetilde{\varphi}$. That is, there exists a collection $\mathcal C$ of mutually disjoint essential simple closed curves, such that $\widetilde{\varphi}(\mathcal C)=\mathcal C$. Moreover, $F\setminus\mathcal C$ can be divided into (possibly disconnected) subsurfaces $F_1,\dots,F_m$, such that the mapping class of $\widetilde{\varphi}|F_i$ is either periodic or pseudo-Anosov. We also require that $\mathcal C$ is the minimum collection of curves with this property, and $\mathcal C\ne\emptyset$.
\end{itemize}

Thurston proved that the mapping torus of $\varphi$ has a complete, finite volume hyperbolic structure in its exterior if and only if the mapping class of $\varphi$ is pseudo-Anosov.

Let $\varphi: F\to F$ be a diffeomorphism such that $\varphi(\partial F)=\mathrm{id}_{\partial F}$.
By Thurston's classification, $\varphi$ is freely isotopic to a standard representative $\widetilde{\varphi}$, whose restriction to $\partial F$ may not be the identity. For each component $C$ of $\partial F$, one can define a {\it fractional Dehn twist coefficient (FDTC)} $c(\varphi)\in\mathbb Q$, which is the rotation number of $\widetilde{\varphi}$ on $C$ compared with $\varphi$. This concept essentially appeared in work of Gabai \cite[Remark~8.7~ii)]{GabaiProblems},
and the term FDTC was coined by Honda--Kazez--Mati\'c \cite{HKMRightVeering}.

Honda--Kazez--Mati\'c \cite{HKMRightVeering} also defined right-veering diffeomorphisms.

\begin{defn}
Let $F$ be a compact surface with boundary, $a,b\subset F$ be two properly embedded arcs with $a(0)=b(0)=x$. We isotope $a,b$ with endpoints fixed, so that $|a\cap b|$ is minimal.
We say $b$ is {\it to the right of $a$}, denoted $a\le b$, if either $b$ is isotopic to $a$ with endpoints fixed, or $(b\cap U)\setminus\{x\}$ lies in the ``right'' component of $U\setminus a$, where $U\subset F$ is a small neighborhood of $x$. See Figure~\ref{fig:ToRight}.
\end{defn}

\begin{figure}[ht]
\begin{picture}(340,66)
\put(100,6){\scalebox{0.5}{\includegraphics*
{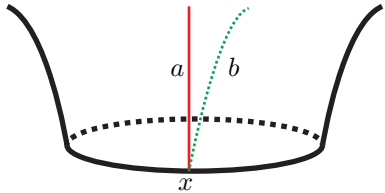}}}

\put(162,43){$a$}

\put(184,43){$b$}

\put(165,0){$x$}

\end{picture}
\caption{\label{fig:ToRight}The arc $b$ is to the right of $a$.}
\end{figure}

\begin{defn}
Let $\varphi\co F\to F$ be a diffeomorphism that restricts to the identity map on $\partial F$. Let $C$ be a component of $\partial F$. Then $\varphi$ is {\it right-veering} with respect to $C$ if for every $x\in C$ and every properly embedded arc $a\subset F$ with $x\in a$, the image $\varphi(a)$ is to the right of $a$ at $x$. Similarly, we can define {\it left-veering} with respect to $C$. If $\varphi$ is right-veering with respect to every component of $\partial F$, we say $\varphi$ is a {\it right-veering} diffeomorphism. Similarly, we can define {\it left-veering} diffeomorphisms.
\end{defn}

If the mapping class of $\varphi$ is pseudo-Anosov, then $\varphi$ is right-veering with respect to $C$ if and only if $c(\varphi)>0$ for $C$ \cite[Proposition~3.1]{HKMRightVeering}. If the mapping class of $\varphi$ is periodic, then $\varphi$ is right-veering with respect to $C$ if $c(\varphi)>0$ for $C$ \cite[Proposition~3.2]{HKMRightVeering}.



\section{Knot Floer homology and the zero surgery formula}\label{sect:ZeroSurg}

In this section, we will use arguments in \cite{NiCharSlope,NiSecondTerm} to prove Propositions~\ref{prop:ZeroSurgS3} and~\ref{prop:ZeroSurg}. We assume the readers are reasonably familiar with Heegaard Floer homology.

\begin{prop}\label{prop:ZeroSurg}
Let $Y$ be a closed, oriented $3$--manifold, and $K\subset Y$ be a hyperbolic fibered knot with
fiber $F$.
 Let $\mathfrak s\in\mathrm{Spin}^c(Y)$ be the underlying Spin$^c$ structure for the open book decomposition of $Y$ with binding $K$ and page $F$. Let $\widehat F\subset Y_0(K)$ be the closed surface obtained by capping off $\partial F$ with a disk, and let $\mathfrak t_k\in\mathrm{Spin}^c(Y_0(K))$ be the Spin$^c$ structure satisfying
\[
\mathfrak t_k|(Y\setminus K)=\mathfrak s|(Y\setminus K),\quad \langle c_1(\mathfrak t_k),\widehat{F}\rangle=2k,\quad k\in\mathbb Z.
\]
Suppose that either $\mathfrak s$ is torsion, or $HF^+(Y,\mathfrak s)=0$.
Suppose also that the monodromy of $K$ is neither left-veering nor right-veering. If \[\mathrm{rank}\widehat{HFK}(Y,K,[F],g(F)-1)=2\] and
$g(F)\ge3$, then
\[
\mathrm{rank}\:HF^+(Y_0(K),\mathfrak t_{g-2})=0.
\]
\end{prop}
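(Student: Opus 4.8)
The plan is to compute $HF^+(Y_0(K),\mathfrak t_{g-2})$ directly from the zero-surgery formula and to show that, under the stated hypotheses, the relevant complex is acyclic. Following \cite{NiCharSlope,NiSecondTerm}, I would first identify $HF^+(Y_0(K),\mathfrak t_{g-2})$ with the homology of the subquotient complex $C\{i<0,\ j\ge g-2\}$ of $CFK^\infty(Y,K)$. Because $g\ge3$ we have $g-2\ge1$, so $\langle c_1(\mathfrak t_{g-2}),\widehat F\rangle=2(g-2)\neq0$; thus $\mathfrak t_{g-2}$ is non-torsion and $HF^+(Y_0(K),\mathfrak t_{g-2})$ is finitely generated, with no infinite tower to track. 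The hypothesis that $\mathfrak s$ is torsion or $HF^+(Y,\mathfrak s)=0$ is what lets me run the surgery formula cleanly in the general manifold $Y$, by keeping the ``$B$''-side $HF^+(Y,\mathfrak s)$ under control.

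The second step is a direct inspection of $C\{i<0,\ j\ge g-2\}$. Since the top Alexander grading is $g$, the lattice points $(i,j)$ with $i<0$, $j\ge g-2$ and $0\le j-i\le g$ force $j-i\in\{g-1,g\}$; that is, the complex is supported entirely in the top two Alexander gradings. The grading $g$ contributes the translates $Uc,U^2c$ of the (rank-one, by fiberedness \cite{OSzGenus}) generator $c$, and the grading $g-1$ contributes $Uy_1,\dots,Uy_r$, where $r=\operatorname{rank}\widehat{HFK}(Y,K,[F],g-1)=2$. One checks that the induced differential has only two kinds of nonzero components: the vertical arrow $\alpha$ sending $c$ into grading $g-1$, and the horizontal arrow $\beta$ sending grading $g-1$ into $c$. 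A short linear-algebra computation with the three-term complex $\langle Uc\rangle\xrightarrow{\alpha}\langle Uy_i\rangle\xrightarrow{\beta}\langle U^2c\rangle$ then yields
\[
\operatorname{rank}HF^+(Y_0(K),\mathfrak t_{g-2})=\operatorname{rank}\widehat{HFK}(Y,K,[F],g-1)-2=0
\]
precisely when both $\alpha\neq0$ and $\beta\neq0$; if either arrow vanishes the rank is instead $2$ or $4$. This also recovers the general formula of Proposition~\ref{prop:ZeroSurgS3}.

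It remains to show that the hypothesis that $\varphi$ is neither left- nor right-veering forces $\alpha\neq0$ and $\beta\neq0$, and this is the step I expect to be the real obstacle. The generator $c$ represents the contact class of the open book $(F,\varphi)$. Since $K$ is hyperbolic, $\varphi$ is pseudo-Anosov, so by \cite[Proposition~3.1]{HKMRightVeering} the non-veering hypothesis says exactly that the fractional Dehn twist coefficient vanishes. Failure of right-veering makes $\xi_{(F,\varphi)}$ overtwisted, so its contact invariant vanishes and $c$ cannot survive; the point, however, is to upgrade this to the sharper statement that the \emph{first} differential $\alpha$ (into grading $g-1$, as opposed to some later differential into a lower grading) is nonzero. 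I would obtain this by relating $\alpha$ and $\beta$ to contact invariants of $K$ and of its surgeries and invoking the fractional-Dehn-twist analysis of Baldwin--Hu--Sivek \cite{BHS}. Finally, the conjugation symmetry of $CFK^\infty$, which interchanges the $i$- and $j$-filtrations, swaps the roles of $\alpha$ and $\beta$ and of right- and left-veering; applying the previous argument to the reversed monodromy then gives $\beta\neq0$ from the failure of left-veering. The crux is thus entirely contained in this translation from the topological non-veering condition to the non-vanishing of the single next-to-top differential.
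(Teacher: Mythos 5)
Your steps (2) and (3) are essentially sound. The three-term analysis of $C\{i<0,\ j\ge g-2\}$ is the paper's Lemma~\ref{lem:ZeroMC}, modulo two small points: $C(-1,g-2)$ carries an internal differential, so the correct statements are that $(\partial_z)_*$ is injective and $(\partial_w)_*$ is surjective \emph{on homology}; and the diagonal component $\partial_{zw}$ must be shown to vanish (it does, since it squares to zero on a copy of $\mathbb Z$). As for the step you flagged as the ``real obstacle,'' the upgrade from ``the contact class dies'' to ``it dies under the very first differential'' is available off the shelf: it is exactly what the argument in the proof of Theorem~1.1 of Baldwin--Vela-Vick \cite{BVV} gives, applied once to rule out right-veering (injectivity of $(\partial_z)_*$) and once, via symmetry, to rule out left-veering (surjectivity of $(\partial_w)_*$); the paper simply cites that argument.

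The genuine gap is your step (1). The identification of $HF^+(Y_0(K),\mathfrak t_{g-2})$ with $H_*(C\{i<0,\ j\ge g-2\})$ is standard for knots in $S^3$ (\cite{OSzKnot}, Corollary~4.5), and that is exactly how the paper proves Proposition~\ref{prop:ZeroSurgS3}; but for a general closed $Y$, in particular when $\mathfrak s$ is non-torsion, no such identification is available, and your proposal never actually uses the hypothesis ``$\mathfrak s$ torsion or $HF^+(Y,\mathfrak s)=0$'' beyond a vague remark about keeping the $B$-side under control. What holds in general is the mapping cone formula $HF^+(Y_0(K),\mathfrak t_{g-2})\cong H_*\bigl(MC(v^+_{g-2}+h^+_{g-2})\bigr)$ with $v^+_{g-2},h^+_{g-2}\co A^+_{g-2}\to B^+$ (torsion case by the argument of \cite{OSzIntSurg}, Subsection~4.8; non-torsion case by \cite{NiPropertyG}, Theorem~3.1), whereas the complex you analyze is quasi-isomorphic to the cone of $v^+_{g-2}$ \emph{alone}. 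Dropping $h^+_{g-2}$ is not harmless: as pure linear algebra, if $(v^+)_*$ were zero and $(h^+)_*$ an isomorphism, the two cones would have completely different homology. The paper bridges this exactly where your hypotheses enter: if $\mathfrak s$ is torsion, the absolute $\mathbb Q$--grading together with $g-2>0$ forces $h^+_{g-2}$ to shift degree strictly less than $v^+_{g-2}$, so $(v^++h^+)_*$ is a $\mathbb Q$--isomorphism whenever $(v^+)_*$ is; if instead $HF^+(Y,\mathfrak s)=0$, then $B^+$ is acyclic, so acyclicity of $MC(v^+_{g-2})$ forces $A^+_{g-2}$ to be $\mathbb Q$--acyclic and hence the cone of $v^++h^+$ is acyclic. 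Your parenthetical claim that your identification ``recovers'' Proposition~\ref{prop:ZeroSurgS3} in the general setting is a red flag: if it were true, Proposition~\ref{prop:ZeroSurg} would hold as a rank formula with no hypothesis on $\mathfrak s$ at all. Note finally that the case needed for Theorem~\ref{thm:FPfree} is precisely the non-torsion one with $HF^+(Y\# Z,\mathfrak s)=0$, i.e.\ the case your argument does not reach.
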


Let $Y$ be a closed, oriented $3$--manifold, $K\subset Y$ be a fibered knot with a fiber $F$ of genus $g$, and let $\mathfrak s\in\mathrm{Spin}^c(Y)$ be the Spin$^c$ structure of the open book decomposition with binding $K$ and page $F$. By \cite{OSzContact}, there exists a Heegaard diagram for $(Y,K)$, such that the topmost knot Floer chain complex $\widehat{CFK}(Y,K,[F],g)$ has a single generator. Let 
\[
C=CFK^{\infty}(Y,K,\mathfrak s,[F])
\]
be the knot Floer chain complex. Let $\partial$ be the differential on $C$, and let $\partial_0$ be the summand of $\partial$ which preserves the $(i,j)$-grading.

We will consider
the chain complex
$C\{i<0, j\ge g-2\}$, which has the form
\begin{equation}\label{eq:MCg-2}
\xymatrixcolsep{5pc}
\begin{xymatrix}{  
& C(-1,g-1)\ar[d]^{\partial_z}\ar[dl]_{\partial_{zw}}\\
C(-2,g-2)&C(-1,g-2)\ar[l]_{\partial_w},}
\end{xymatrix}
\end{equation}
where
\begin{equation}\label{eq:TopZ}
C(-i,g-i)\cong \widehat{CFK}(Y,K,\mathfrak s,[F],g)\cong\mathbb Z, \text{ for all }i\in\mathbb Z, 
\end{equation}
and
\[ C(-1,g-2)\cong\widehat{CFK}(Y,K,\mathfrak s,[F],g-1).
\]

By (\ref{eq:TopZ}), $\partial_0=0$ on $C(-1,g-1)$ and $C(-2,g-2)$. Since $\partial^2=0$, we see that 
\begin{equation}\label{eq:dwdz}
\partial_w\partial_z=0
\end{equation} 
and $\partial_{zw}^2=0$, where we use (\ref{eq:TopZ}) to identify every $C(-i,g-i)$ with $\mathbb Z$. So we must have
\[
\partial_{zw}=0.
\]

\begin{lem}\label{lem:ZeroMC}
If the monodromy of $K$ is neither left-veering nor right-veering, then
the rank of the homology of the mapping cone (\ref{eq:MCg-2}) is \[\mathrm{rank}\widehat{HFK}(Y,K,\mathfrak s,[F],g-1)-2.\]
\end{lem}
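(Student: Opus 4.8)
The plan is to compute the homology of the mapping cone (\ref{eq:MCg-2}) directly from its structure as a two-step complex, using the already-established vanishing of $\partial_{zw}$. Since $\partial_{zw}=0$, the complex (\ref{eq:MCg-2}) decomposes: the term $C(-1,g-1)\cong\mathbb Z$ maps only via $\partial_z$ into $C(-1,g-2)$, and the term $C(-2,g-2)\cong\mathbb Z$ receives only via $\partial_w$ from $C(-1,g-2)$. So the whole complex is really the total complex of the sequence
\[
\mathbb Z\xrightarrow{\;\partial_z\;}C(-1,g-2)\xrightarrow{\;\partial_w\;}\mathbb Z,
\]
with $\partial_w\partial_z=0$ by (\ref{eq:dwdz}). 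Thus I would first argue that the homology of the mapping cone equals $\ker\partial_z\oplus\bigl(\ker\partial_w/\operatorname{im}\partial_z\bigr)\oplus\operatorname{coker}\partial_w$, reading off the contributions at the three grading levels.

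\emph{The key step} is to determine the ranks of $\partial_z$ and $\partial_w$, and this is where the hypothesis that the monodromy is \emph{neither left-veering nor right-veering} enters. The maps $\partial_z$ and $\partial_w$ are, up to sign, multiplication by integers $\mathbb Z\to\mathbb Z$ and $\mathbb Z\to\mathbb Z$ after identifying the top generators via (\ref{eq:TopZ}); the relevant point is whether these maps are nonzero. I expect the connection to veering to run through the contact-geometric interpretation: the map $\partial_z$ (respectively $\partial_w$) detects whether the monodromy is right-veering (respectively left-veering), so that being neither forces both $\partial_z$ and $\partial_w$ to be nonzero over $\mathbb Q$ (equivalently, of rank $1$). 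This is the step I expect to be the main obstacle, since it requires correctly matching the two ``veering'' directions with the two off-diagonal differentials and invoking the appropriate result (in the style of \cite{HKMRightVeering, BVV, NiSecondTerm}) identifying these differentials with the nonvanishing of the contact invariant in each orientation.

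\emph{Granting} that both $\partial_z$ and $\partial_w$ have rank $1$ over $\mathbb Q$, I would finish by rank-counting. Write $r=\mathrm{rank}\,C(-1,g-2)=\mathrm{rank}\widehat{HFK}(Y,K,\mathfrak s,[F],g-1)$. Since $\partial_z\colon\mathbb Z\to C(-1,g-2)$ has rank $1$, we get $\ker\partial_z=0$, so the top level contributes nothing. Since $\partial_w\colon C(-1,g-2)\to\mathbb Z$ has rank $1$, we get $\operatorname{coker}\partial_w=0$ rationally, so the bottom level contributes nothing. The middle contribution is $\ker\partial_w/\operatorname{im}\partial_z$, of rank $(r-1)-1=r-2$, using $\operatorname{rank}\ker\partial_w=r-1$ and $\operatorname{rank}\operatorname{im}\partial_z=1$, together with the relation $\operatorname{im}\partial_z\subseteq\ker\partial_w$ guaranteed by $\partial_w\partial_z=0$. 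Summing the three contributions gives total rank $r-2$, which is exactly $\mathrm{rank}\widehat{HFK}(Y,K,\mathfrak s,[F],g-1)-2$, as claimed.
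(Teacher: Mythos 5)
Your proposal follows the same skeleton as the paper's proof---kill $\partial_{zw}$, use the veering hypothesis to control $\partial_z$ and $\partial_w$, then count ranks---but it has a genuine gap: you treat $C(-1,g-2)$ as a bare abelian group, when in fact it is the chain complex $\widehat{CFK}(Y,K,\mathfrak s,[F],g-1)$ carrying its own internal differential $\partial_0$, which need not vanish; only its \emph{homology} is $\widehat{HFK}(Y,K,\mathfrak s,[F],g-1)$. Consequently your identification $r=\mathrm{rank}\,C(-1,g-2)=\mathrm{rank}\,\widehat{HFK}(Y,K,\mathfrak s,[F],g-1)$ is unjustified, and your formula $\ker\partial_z\oplus(\ker\partial_w/\operatorname{im}\partial_z)\oplus\operatorname{coker}\partial_w$ computes the homology of the mapping cone (\ref{eq:MCg-2}) only in the special case $\partial_0=0$; in general $\partial_0$ is part of the cone's differential and cannot be ignored.

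This feeds into a second, more serious gap at your ``key step'': chain-level nonvanishing of $\partial_z$ and $\partial_w$ is strictly weaker than what the rank count requires. What the paper extracts from the argument of \cite[Theorem~1.1]{BVV} is a homology-level statement: $(\partial_z)_*\colon\mathbb Z\to H_*(C(-1,g-2))$ is injective and $(\partial_w)_*\colon H_*(C(-1,g-2))\to\mathbb Z$ is surjective. The distinction matters: if, say, $\partial_z(1)=\partial_0(y)\ne0$ is a boundary and $\partial_w$ vanishes on all cycles but satisfies $\partial_w(y)\ne0$ (a configuration compatible with $\partial^2=0$ and (\ref{eq:dwdz})), then both maps are nonzero at chain level, yet a direct computation shows the homology of (\ref{eq:MCg-2}) has rank $\mathrm{rank}\,\widehat{HFK}(Y,K,\mathfrak s,[F],g-1)$, not $\mathrm{rank}\,\widehat{HFK}(Y,K,\mathfrak s,[F],g-1)-2$, so your conclusion fails. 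Your computation can be repaired: over $\mathbb Q$ one may first perform a filtered change of basis making $\partial_0=0$, after which the surviving $\partial_z$ and $\partial_w$ are precisely the induced maps on homology and your three-term count goes through verbatim; but then the veering hypothesis must be invoked in exactly the injectivity/surjectivity form above---which is the content of the Baldwin--Vela-Vick argument relating not-right-veering and not-left-veering to the vanishing of the two contact classes---rather than as mere nonvanishing of the original chain-level differentials.
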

\begin{proof}
As we have showed above, the mapping cone (\ref{eq:MCg-2}) becomes
the chain complex
\begin{equation}\label{eq:NewMCg-2}
\xymatrixcolsep{5pc}
\begin{xymatrix}{  
& \mathbb Z\ar[d]^{\partial_z}\\
\mathbb Z&C(-1,g-2)\ar[l]_{\partial_w}.}
\end{xymatrix}
\end{equation}

Since the monodromy of $K$ is neither left-veering nor right-veering, by the argument in the proof of \cite[Theorem~1.1]{BVV}, the induced map \[(\partial_z)_*:\mathbb Z\to H_*(C(-1,g-2))\] is injective, and the induced map
\[(\partial_w)_*:H_*(C(-1,g-2))\to \mathbb Z\] is surjective. Using (\ref{eq:dwdz}),
we see that the rank of the homology of (\ref{eq:NewMCg-2}) is
\[
\mathrm{rank}\:H_*(C(-1,g-2))-2=\mathrm{rank}\:\widehat{HFK}(Y,K,\mathfrak s,[F],g-1)-2.\qedhere
\]
\end{proof}

\begin{proof}[Proof of Proposition~\ref{prop:ZeroSurgS3}]
By a standard argument, (see, for example, \cite[Corollary~4.5]{OSzKnot},) $HF^+(S^3_0(K),g-2)$ is isomorphic to the homology of (\ref{eq:MCg-2}), so our conclusion follows from 
Lemma~\ref{lem:ZeroMC}.
\end{proof}

As in \cite{OSzIntSurg}, for any $k\in\mathbb Z$, let
$$A^+_k=C\{i\ge0 \text{ or }j\ge k\},\quad k\in\mathbb Z$$
and $B^+=C\{i\ge0\}\cong CF^+(Y,\mathfrak s)$. There are chain maps
$$v^+_k,h^+_k\co A^+_k\to B^+.$$
Here $v^+_k$ is the vertical projection, and $h^+_k$ is essentially the horizontal projection.

\begin{proof}[Proof of Proposition~\ref{prop:ZeroSurg}]
It is well known that $HF^+(Y_0(K),\mathfrak t_{g-2})$ is isomorphic to the homology of $MC(v^+_{g-2}+h^+_{g-2})$, the mapping cone of 
\[
v^+_{g-2}+h^+_{g-2}\co A^+_k\to B^+.
\]
In fact, when $\mathfrak s$ is torsion, this result follows from the same argument as in \cite[Subsection~4.8]{OSzIntSurg}; when $\mathfrak s$ is non-torsion, the formula is proved in \cite[Theorem~3.1]{NiPropertyG}.

By the exact triangle
\begin{equation}\label{eq:v(g-2)}
\xymatrix{
H_*(A^+_{g-2})\ar[r]^-{(v^+_{g-2})_*}&H_*(B^+)\ar[ld]\\ 
H_*(C\{i<0, j\ge g-2\})\ar[u]&  
},
\end{equation}
$C\{i<0, j\ge g-2\}$ is quasi-isomorphic to $MC(v^+_{g-2})$, the mapping cone of $v^+_{g-2}$.

By Lemma~\ref{lem:ZeroMC}, $H_*(MC(v^+_{g-2});\mathbb Q)=0$, so $(v^+_{g-2})_*$ is an isomorphism over $\mathbb Q$. 

If $\mathfrak s$ is torsion, there is an absolute $\mathbb Q$--grading on $A^+_{k}$ and $B^+$. Since $g-2>0$, the grading shift of $h^+_{g-2}$ is strictly less than the grading shift of $v^+_{g-2}$. Hence  $(v^+_{g-2})_*+(h^+_{g-2})_*$ is also an isomorphism over $\mathbb Q$, which implies that \begin{equation}\label{eq:Y0=0}
HF^+(Y_0(K),\mathfrak t_{g-2};\mathbb Q)=0.\end{equation}

If $HF^+(Y,\mathfrak s)=0$, $H_*(B^+)=0$, so $H_*(A^+_{g-2};\mathbb Q)=0$ since  $(v^+_{g-2})_*$ is an isomorphism over $\mathbb Q$. We again have (\ref{eq:Y0=0}).
\end{proof}


\section{Proof of the main theorem}\label{sect:Main}

\begin{lem}\label{lem:HypS1S2}
There exists a hyperbolic fibered knot $L\subset Z=S^1\times S^2$ with fiber $G$, such that the monodromy of the fibration is right-veering, and the Spin$^c$ structure of the open book decomposition with binding $L$ and page $G$ is non-torsion.
\end{lem}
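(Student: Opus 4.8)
The plan is to build the knot as the binding of an explicit open book $(G,\varphi)$ and to handle its three required features one at a time, translating each into a more tractable condition. Since $H^2(S^1\times S^2;\mathbb Z)\cong\mathbb Z$ has no torsion, the open book $\mathrm{Spin}^c$ structure $\mathfrak s=\mathfrak s_\xi$ is non-torsion precisely when $c_1(\xi)\neq 0$, where $\xi$ is the contact structure compatible with $(G,\varphi)$. The unique tight contact structure on $S^1\times S^2$ has $c_1=0$, so this condition forces $\xi$ to be overtwisted; moreover $c_1(\xi)$ is even, since its mod-$2$ reduction is $w_2(TY)=0$, so a natural target is $c_1(\xi)=2$. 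By the Honda--Kazez--Mati\'c criterion recalled above, once $\varphi$ is pseudo-Anosov, equivalently once $L$ is hyperbolic, the monodromy is right-veering if and only if its fractional Dehn twist coefficient is positive. So the goal becomes: exhibit a pseudo-Anosov, right-veering monodromy $\varphi$ on a once-punctured surface whose open book is $S^1\times S^2$ and whose supported contact structure is overtwisted with $c_1\neq0$.

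First I would produce a right-veering open book $(G_0,\varphi_0)$ on $S^1\times S^2$ whose necessarily overtwisted compatible contact structure $\xi$ has $c_1(\xi)\neq 0$. Here the homological monodromy must satisfy $\mathrm{coker}\big((\varphi_0)_\ast-\mathrm{id}\big)\cong\mathbb Z$, a necessary condition for $b_1=1$, after which I would identify the total space as $S^1\times S^2$ directly from the gluing; the overtwistedness and the value of $c_1(\xi)$ I would certify either by computing the relative Euler class of $\xi$ on the generator of $H_2$ from the page data, or via a contact-surgery presentation of $S^1\times S^2$. Second, I would positively stabilize $(G_0,\varphi_0)$, that is, plumb on positive Hopf bands, choosing the attaching arcs so that, together with the twist curves already present in $\varphi_0$, they fill the enlarged page. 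Positive stabilization preserves the compatible contact structure, hence keeps the total space equal to $S^1\times S^2$ and keeps $c_1(\xi)$, and it preserves right-veering, since positive Dehn twists and the extended map lie in the right-veering monoid; the only new feature gained is that the monodromy becomes pseudo-Anosov and $L$ becomes hyperbolic. Because $\varphi_0$ already carries a negative twist (forced by overtwistedness), arranging the positive stabilization curves together with this negative curve to fill the page lets me invoke a Penner-type criterion to certify that the stabilized monodromy is pseudo-Anosov.

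The hard part will be the first stage: making right-veering coexist with non-torsion $c_1$ on $S^1\times S^2$. Right-veering is a positivity condition, whereas non-torsion $c_1$ forces $\xi$ to be overtwisted, so $\varphi_0$ cannot be a product of positive Dehn twists, as that would make $\xi$ Stein fillable and hence equal to the tight structure with $c_1=0$; one is thus pushed into the subtle regime of right-veering yet overtwisted monodromies, and after the second stage, of pseudo-Anosov maps with fractional Dehn twist coefficient in $(0,1)$. I expect to settle this by an explicit low-genus example: a word in Dehn twists on $\Sigma_{1,1}$ or $\Sigma_{2,1}$ consisting of positive twists and a single negative twist, chosen so that (i) the homological action has cokernel $\mathbb Z$ and the total space is $S^1\times S^2$, (ii) the word is still right-veering, checked arc-by-arc or through a fractional-Dehn-twist computation, and (iii) the negative twist renders $\xi$ overtwisted with $c_1\neq0$. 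A more structural alternative is to fix, via Eliashberg's classification, an overtwisted $\xi$ on $S^1\times S^2$ with $c_1=2$ and then prove that it admits a right-veering supporting open book; the obstacle there is that a generic supporting open book is not right-veering and positive stabilization cannot repair a left-veering arc that avoids the new handles, so right-veering would have to be arranged by hand rather than inherited.
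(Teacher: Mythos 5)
There is a genuine gap: your proof never actually happens. Everything hinges on your ``first stage'' --- an explicit open book $(G_0,\varphi_0)$ for $S^1\times S^2$ whose monodromy is right-veering and whose supported contact structure is overtwisted with $c_1\neq 0$ --- and you do not construct it; you only say you ``expect to settle this by an explicit low-genus example.'' That stage is essentially the entire content of the lemma: one would have to verify that the explicit open book has total space $S^1\times S^2$ (the homological condition $\mathrm{coker}((\varphi_0)_*-\mathrm{id})\cong\mathbb Z$ you mention is necessary but far from sufficient), verify right-veering arc by arc for a word that is \emph{not} a product of positive twists, and compute $c_1$ of the supported structure. Your preliminary reductions (non-torsion Spin$^c$ $\Leftrightarrow$ $c_1(\xi)\neq 0$ on $S^1\times S^2$, hence $\xi$ overtwisted; right-veering $\Leftrightarrow$ positive FDTC for pseudo-Anosov monodromy) are correct, but they only translate the problem. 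The second stage also has an unaddressed issue: Penner's criterion requires the positively and negatively twisted curves to form two systems of \emph{mutually disjoint} curves whose union fills, which the stabilizing curves together with the twists already present in $\varphi_0$ need not satisfy.

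More importantly, the ``structural alternative'' you dismiss in your final sentence is exactly the paper's proof, and the obstacle you invoke to reject it is not real. The paper takes a non-torsion (hence overtwisted) contact structure $\xi$ on $S^1\times S^2$ from Eliashberg's classification \cite{Eliashberg}, takes \emph{any} supporting open book, and applies the stabilization theorem of Colin--Honda \cite{CH}: after suitably chosen positive stabilizations the open book has connected binding and monodromy that is right-veering and freely isotopic to a pseudo-Anosov map. Since positive stabilization preserves the supported contact structure and the ambient manifold, the Spin$^c$ structure remains non-torsion and the total space remains $S^1\times S^2$, which finishes the proof in three sentences. Your claim that ``positive stabilization cannot repair a left-veering arc that avoids the new handles'' is only true for stabilizations whose twist curves are chosen to avoid that arc and its image; the whole point of the results of Honda--Kazez--Mati\'c \cite{HKMRightVeering} and Colin--Honda \cite{CH} is that the stabilizing arcs can be routed through the page so that no left-veering arc survives. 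So right-veering \emph{can} be arranged by stabilization, and the hard explicit construction your plan depends on is unnecessary.
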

\begin{proof}
By \cite{Eliashberg}, there exists a non-torsion contact structure $\xi$ on $Z$. Let $(S,h)$ be an open book decomposition supporting $\xi$. By \cite{CH}, we can stabilize $(S,h)$ many times to get a new open book $(G,\psi)$ with connected binding $L$, such that the  monodromy $\psi$ is pseudo-Anosov and right-veering.
\end{proof}

Let $L$ be the knot as in Lemma~\ref{lem:HypS1S2}, and let $L'$ be the $(2n+1,2)$--cable of $L$ for a sufficiently large integer $n$. Let $E$ be the fiber of the new fibration of $Z$ with $\partial E=L'$, then 
\[
E=T\cup G_1\cup G_2,
\]
where $T$ is a genus $n$ surface with $3$ boundary components, and $G_1,G_2$ are two copies of $G$. Let $\rho$ be the monodromy of $L'$. Then $\rho|T$ is isotopic to a periodic map of period $4n+2$, and $\rho$ swaps $G_1,G_2$.

\begin{lem}\label{lem:4n+2}
The FDTC of $\rho$ on $L'$ is $\frac1{4n+2}$.
\end{lem}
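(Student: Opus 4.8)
The plan is to reduce the computation of the FDTC of $\rho$ on $L'$ to a rotation-number count for the periodic piece $\rho|T$, and then to evaluate that count using the local model of the $(2n+1,2)$--cabling. Recall that the FDTC on a boundary component $C$ is, by definition, the rotation number at $C$ of the Thurston standard representative $\widetilde\rho$ freely isotopic to $\rho$. Since $\rho|T$ is isotopic to a periodic map of period $4n+2$ and $L'=\partial E$ lies in $\partial T$, the representative $\widetilde\rho$ may be taken to agree with this periodic map near $L'$. Thus the FDTC on $L'$ equals the rotation number at $L'$ of a periodic diffeomorphism of $T$ of order $4n+2$; in particular it is of the form $\frac{k}{4n+2}$ for some integer $k$, and the whole problem is to show $k=1$.

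First I would set up the local picture inside the solid torus $V=N(L)$. The cable $L'$ sits on $\partial V$ as the $(2n+1,2)$--curve, and the fiber $E$ meets $V$ in exactly the surface $T$, whose two inner boundary components are longitudes on $\partial V$ (where the copies $G_1,G_2$ are attached) and whose outer boundary is $L'$. The key observation is that the rotation number of $\widetilde\rho$ at $L'$ is a purely local quantity, determined entirely by the cabling pattern in $V$ together with the periodic structure of $\rho|T$ near $L'$, and is therefore independent of the particular knot $L$ produced in Lemma~\ref{lem:HypS1S2}. Consequently this rotation number coincides with the corresponding rotation number for the $(2n+1,2)$--torus knot $T_{2n+1,2}\subset S^3$, whose fibration has the same cabling region $T$ and the same periodic monodromy near the outer boundary.

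It then remains to compute the boundary rotation number for $T_{2n+1,2}$. Here I would use the Milnor (Seifert fibered) description: the monodromy of the torus knot $T_{p,q}$ is periodic of period $pq$ and is isotopic to the rotation coming from the cyclic deck action of the Seifert fibration, which rotates the binding by $\frac{1}{pq}$; with $p=2n+1$ and $q=2$ this gives period $pq=4n+2$ and rotation number $\frac{1}{4n+2}$. Combining with the previous reduction yields $c(\rho)=\frac{1}{4n+2}$ on $L'$, as claimed.

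The main obstacle is pinning down the numerator exactly, that is, verifying $k=1$ (rather than some other $\frac{k}{4n+2}$) and with the correct sign. This amounts to tracking carefully how the order--$(4n+2)$ periodic map acts on the outer boundary $L'$ of $T$---in particular confirming that one application of $\rho$ advances $L'$ by a single $\frac{1}{4n+2}$--step---which requires fixing the orientation conventions and the identification of $L'$ with the $(2n+1,2)$--curve, and checking that this is compatible with the fact that $\rho$ swaps the two inner boundary components $\partial G_1$ and $\partial G_2$ (the order--$2$ factor in $4n+2=2(2n+1)$). Once the sign and the generating step are fixed, the remaining count is routine.
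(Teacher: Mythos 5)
Your argument is correct and is essentially the paper's: both proofs reduce the FDTC at $L'$ to the Seifert-fibered cable space adjacent to it (the paper phrases this as decomposing the complement of $L'$ into the cable space and $Z\setminus L$, you phrase it as locality of the rotation number of the Thurston representative), and both extract the value $\frac{1}{4n+2}$ from the periodic/Seifert structure of that piece --- the paper by observing that the Seifert fiber has slope $4n+2$ on $L'$ and invoking the definition of FDTC, you by transferring the count to the torus knot $T_{2n+1,2}$, whose FDTC is classically $\frac{1}{pq}$ via the Milnor fibration. Your detour through the torus knot is a harmless repackaging of the same slope computation, and it in fact settles the numerator question you flag at the end, since the classical torus-knot fact already pins down $k=1$ with the correct sign for the positive cable.
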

\begin{proof}
The complement of $L'$ is the union of a cable space and $Z\setminus L$. The slope on $L'$ of the Seifert fiber of the cable space is $4n+2$.
Our conclusion follows from the definition of FDTC.
\end{proof}

\begin{proof}[Proof of Theorem~\ref{thm:FPfree}]
The proof of Theorem~\ref{thm:FPfree} uses a similar argument as \cite[Theorem~3.5]{BHS}. The new input here is to replace  \cite[Equation~(3.3)]{BHS} with Proposition~\ref{prop:ZeroSurg}.

Since $\mathrm{rank}\widehat{HFK}(Y,K,[F],g(F)-1)=1$, it follows from \cite[Theorem~A.1]{NiCharSlope} that $\varphi$ is either right-veering or left-veering. Without loss of generality, we assume $\varphi$ is right-veering.

Let $L'$ be as in Lemma~\ref{lem:4n+2}. By \cite{Hedden}, we have
\[
\widehat{HFK}(Z,L',[E],g(E)-1)\cong\mathbb Z.
\]

Consider the connected sum $K\#\overline{L'}$, which is a knot in $Y\#Z$. Let $g'$ be the genus of the Seifert surface $F\natural \overline E$.
 By the K\"unneth formula, 
\begin{equation}\label{eq:2r}
\mathrm{rank}\widehat{HFK}(Y\#Z,K\#\overline{L'},[F\natural \overline E], g'-1)=2.
\end{equation}
The monodromy of $K\#\overline{L'}$ is a map $\sigma$ on $F\natural \overline E$. By Lemma~\ref{lem:4n+2}, $\sigma|\overline{E}$ is left-veering, so $\sigma$ is neither left-veering nor right-veering. 

Let $\mathfrak s\in \mathrm{Spin}^c(Y\#Z)$ be the Spin$^c$ structure of the open book decomposition with binding $K\#\overline{L'}$ and page $F\natural \overline E$.
Since the restriction of $\mathfrak s$ to $Z\setminus B^3$ is non-torsion, $HF^+(Y\#Z,\mathfrak s)=0$ by the adjunction inequality \cite{OSzAnn2}.

Now Proposition~\ref{prop:ZeroSurg} and (\ref{eq:2r}) imply that
\begin{equation}\label{eq:2r-2}
\mathrm{rank}\:HF^+((Y\#Z)_0(K\#\overline{L'}),g'-2)=0.
\end{equation}

The manifold $(Y\#Z)_0(K\#\overline{L'})$ is a surface bundle over $S^1$. Its fiber $P$ is a closed surface which is the union of $F$ and $\overline E$.
Let $\widehat{\sigma}$ be the monodromy, then $\widehat{\sigma}|F=\varphi$.

The rest of our argument is similar to \cite[Theorem~3.5]{BHS}. Using work of Lee--Taubes \cite{LeeTaubes}, Kutluhan--Lee--Taubes \cite{KLT}, Kronheimer--Mrowka \cite{KMBook}, one sees that $HF^+((Y\#Z)_0(K\#\overline{L'}),g'-2)$ is isomorphic to the symplectic Floer homology $HF^{symp}_*(P,\widehat{\sigma})$ of $(P,\widehat{\sigma})$. By (\ref{eq:2r-2}), $HF^{symp}_*(P,\widehat{\sigma};\mathbb Q)=0$. 

Recall that we assume $\varphi$ is right-veering. In the terminology of \cite{CC}, $\varphi$ has no Type IIId fixed points.
 By \cite[Theorem~4.16]{CC}, $HF^{symp}_*(P,\widehat{\sigma};\mathbb Q)$ contains a direct summand which is freely generated by a superset of the fixed points of the pseudo-Anosov representative $\widetilde{\varphi}$ of $\varphi$. So $\widetilde{\varphi}$ has no fixed points. 
\end{proof}

\begin{rem}
In \cite{BHS}, the authors computed $S^3_0(K\#\overline K, g(K\#\overline K)-2)$ to get the conclusion. This approach will also work in the general case if the underlying Spin$^c$ structure of the open book decomposition corresponding to $K$ is torsion.
\end{rem}


\end{document}